\newtheorem{theorem}{Theorem}
\newtheorem{lemma}[theorem]{Lemma}
\newtheorem*{theorem-non}{Theorem}
\newcommand{\RR}{\mathbb{R}}
\newcommand{\ZZ}{\mathbb{Z}}
\newcommand{\CC}{\mathbb{C}}
\newcommand{\QQ}{\mathbb{Q}}
\newcommand{\MM}{\mathfrak{M}}
\begin{document}

\title{Effective Methods for Norm-Form Equations}
\author[Prajeet Bajpai]{Prajeet Bajpai}
\address{Department of Mathematics, University of British Columbia, Vancouver, B.C., V6T 1Z2 Canada}
\email{prajeet@math.ubc.ca}

\date{\today}

\keywords{Norm-Form equations, Thue equations, Schmidt's Subspace Theorem, Baker's method, Linear Forms in Logarithms.}
\subjclass[2020]{Primary 11D57, Secondary 11D45, 11J86, 11Y50.}

\begin{abstract}
While effective resolution of Thue equations has been well understood since the work of Baker in the 1960s, similar results for norm-form equations in more than two variables have proven difficult to achieve. In 1983, Vojta was able to address the case of three variables over totally complex and Galois number fields. In this paper, we extend his results to effectively resolve several new classes of norm-form equations. In particular, we completely and effectively settle the question of norm-form equations over totally complex Galois sextic fields.
\end{abstract}

\maketitle

\section{Introduction}

Let $\alpha_1,\ldots, \alpha_k$ be $\QQ$-linearly independent elements of a number field $K$ and let $m$ be any fixed nonzero rational integer. The equation
\begin{equation}\label{normform}
N_{K/\QQ}(x_1\alpha_1 + \ldots + x_k\alpha_k) = m,
\end{equation}
where $x_1,\ldots,x_k$ are rational integers is  a \emph{norm-form equation}. The case $k=2$ corresponds to Thue equations, which Thue \cite{thue} showed in 1909 have at most finitely many solutions, provided $[K : \mathbb{Q}] \geq 3$. His proof was ineffective, meaning it could not be used-- even in principle-- to determine a complete list of solutions to any given equation. Effective solutions to Thue equations were first given via Baker's theorem on linear forms in logarithms. There is now an extensive literature on effective and explicit resolution for Thue equations, for some examples see \cite{biluhanrot}, \cite{thomas}, \cite{togbevoutierwalsh}, \cite{tdw1}, \cite{tdw2}.

Concerning the more general case of equation \eqref{normform} for $k\ge 3$, Schmidt \cite{schmidt1971} proved in 1971 that such equations have only finitely many solutions provided they satisfy a certain `non-degeneracy condition'. In a subsequent paper, he also showed in the `degenerate' case that the solutions lie in finitely many families \cite{schmidt1972}. For a thorough account of his characterisation of solutions to norm-form equations, see \cite{schmidtbook}. These results rely on his Subspace Theorem, meaning again that they are ineffective. Certain specific families of norm-form equations have since been solved effectively, via a wide variety of techniques including  Pad\'e approximation \cite{bennettnormform},  Skolem's $p$-adic method \cite{stroekertzanakis}, and, most frequently, through bounds for linear forms in logarithms. In 1970, Gy\H  ory and Lov\' asz \cite{gyorylovasz} were able to (effectively) solve \emph{non-degenerate} norm-form equations in three variables over CM fields by reducing them to Thue equations. Subsequently, Gy\H ory \cite{gyory1981} and Bugeaud-Gy\H ory \cite{bugeaudgyory} resolved certain types of norm-form equations by reducing them to Thue equations over relative extensions of number fields. However, their hypotheses are quite restrictive, requiring 
$$
[K:\QQ(\alpha_1,\ldots,\alpha_{n-1})] \ge 3
$$
 and only considering solutions with $x_n \neq 0$. In particular, this often rules out effective solution of equations corresponding to a ``power basis'',  e.g. $N(x+y\theta + z\theta^2) = 1$, which are commonly considered examples. The question of effective resolution of general norm-form equations remains open in most cases.

In his PhD thesis in 1983, Vojta \cite{vojta} used a pigeonhole argument to demonstrate an effective solution for norm-form equations in three variables over totally complex Galois number fields -- with no restriction on degree. The purpose of this paper is to  extend these results in several directions. First, we prove that in Vojta's theorem one may drop the requirement that $K$ be Galois. This gives us
\begin{theorem}\label{nongal}
Let $K$ be a totally complex (not necessarily Galois) number field and let $\alpha_1,\alpha_2,\alpha_3$ be $\QQ$-linearly independent elements of $K$ such that the ratios $\alpha_j/\alpha_k$ generate $K$ over $\QQ$. Then, for any fixed integer $m$, the solutions of the norm-form equation
\[
N_{K/\QQ}(x_1\alpha_1 + x_2\alpha_2 + x_3\alpha_3) = m
\]
can be effectively determined.
\end{theorem}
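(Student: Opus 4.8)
The plan is to reduce the equation to a small linear form in logarithms and close with Baker's method; the totally complex hypothesis is exactly what makes such a linear form available. Write $\beta = x_1\alpha_1 + x_2\alpha_2 + x_3\alpha_3$ and $H = \max_j |x_j|$; it is enough to bound $H$ effectively, since a finite search then finishes the problem (and if $m = 0$ then $\beta = 0$, forcing $x = 0$, so assume $m \neq 0$). Let $\sigma_1, \dots, \sigma_n$, with $n = [K:\QQ]$ even, be the embeddings of $K$; write $\bar\sigma$ for the complex conjugate of an embedding $\sigma$, and set $\beta^\sigma = \sigma(\beta)$, $L_\sigma(y) = y_1\sigma(\alpha_1) + y_2\sigma(\alpha_2) + y_3\sigma(\alpha_3)$, so that $\beta^\sigma = L_\sigma(x)$ and $\prod_\sigma |\beta^\sigma| = |m|$. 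I would first record two consequences of the hypotheses. Since $\alpha_1,\alpha_2,\alpha_3$ are $\QQ$-linearly independent, their vectors of conjugates in $\CC^n$ are $\CC$-linearly independent (a polynomial of degree $< n$ with $n$ roots vanishes), so $y\mapsto\max_\sigma|L_\sigma(y)|$ is a norm on $\RR^3$ equivalent to $\|y\|_\infty$ with effective constants, whence $\max_\sigma|\beta^\sigma| \asymp H$. And since the ratios $\alpha_j/\alpha_k$ generate $K$, proportionality of $L_\sigma$ and $L_{\bar\sigma}$ would force $\sigma(K)\subseteq\RR$, impossible as $K$ is totally complex; so $L_\sigma$ and $L_{\bar\sigma}$ are never proportional.

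Next I would single out four conjugates. Because $|\beta^{\bar\sigma}| = |\beta^\sigma|$, if $\sigma$ achieves $\max_\sigma|\beta^\sigma|$ then $|\beta^\sigma| = |\beta^{\bar\sigma}| \gg H$, while $\prod_\sigma|\beta^\sigma| = |m|$ forces $\min_\sigma|\beta^\sigma| \ll H^{-2/(n-2)}$, so the minimising $\tau$ satisfies $|\beta^\tau| = |\beta^{\bar\tau}| \ll H^{-2/(n-2)}$; for $H$ large $\sigma,\bar\sigma,\tau,\bar\tau$ are four distinct embeddings. On the arithmetic side, $|N_{K/\QQ}(\beta)| = |m|$ shows $(\beta)$ is one of finitely many principal ideals of norm $|m|$, so $\beta = \gamma\varepsilon$ with $\gamma$ from an effectively computable finite set and $\varepsilon$ a unit; writing $\varepsilon = \omega\prod_l \varepsilon_l^{b_l}$ with $\omega$ a root of unity and $\varepsilon_1,\dots,\varepsilon_{n/2-1}$ a fixed fundamental system, non-vanishing of the regulator together with $\max_\sigma|\beta^\sigma| \asymp H$ yields $\max_l|b_l| \ll \log H$. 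Every algebraic number appearing lies in the Galois closure $\widetilde{K}$ of $K$, and it is precisely passing to $\widetilde{K}$ that removes the Galois hypothesis present in Vojta's theorem.

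The crux is then to extract the linear form. The forms $L_\sigma, L_{\bar\sigma}, L_\tau, L_{\bar\tau}$ in three variables are linearly dependent, and since $L_\sigma$ and $L_{\bar\sigma}$ are not proportional the dependence can be taken with the coefficients of $L_\sigma, L_{\bar\sigma}$ not both zero (else every dependence would be supported on $L_\tau, L_{\bar\tau}$, which are likewise non-proportional); fixing such a relation $d_1L_\sigma + d_2L_{\bar\sigma} + d_3L_\tau + d_4L_{\bar\tau} = 0$ (one for each of the finitely many pairs $(\sigma,\tau)$) and evaluating at $x$, while using $|\beta^\tau|,|\beta^{\bar\tau}| \ll H^{-2/(n-2)}$, gives $d_1\beta^\sigma + d_2\beta^{\bar\sigma} = O\bigl(H^{-2/(n-2)}\bigr)$. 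If $d_1$ or $d_2$ vanishes, the bound $|\beta^\sigma| \gg H$ (or $|\beta^{\bar\sigma}| \gg H$) bounds $H$ at once; otherwise, with $\mu = -d_2/d_1 \in \widetilde{K}^{\times}$, the quantity
\[
\Lambda \;=\; \operatorname{Log}\frac{\beta^\sigma}{\mu\,\beta^{\bar\sigma}} \;=\; \log\frac{\gamma^\sigma\omega^\sigma}{\mu\,\gamma^{\bar\sigma}\omega^{\bar\sigma}} \;+\; \sum_l b_l\bigl(\log\varepsilon_l^\sigma - \log\varepsilon_l^{\bar\sigma}\bigr) \;+\; 2\ell\operatorname{Log}(-1), \qquad \ell\in\ZZ,\ |\ell| \ll \log H,
\]
satisfies $|\Lambda| \ll H^{-2/(n-2)-1}$ and is a $\ZZ$-linear form, with coefficients $\ll \log H$, in logarithms of fixed elements of $\widetilde{K}$ of bounded height. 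If $\Lambda \neq 0$, the Baker--W\"{u}stholz (or Matveev) lower bound gives $|\Lambda| \gg (\log H)^{-C}$ with $C$ effective, and comparing with the upper bound bounds $H$. If $\Lambda = 0$, then $\sigma(\beta)/\overline{\sigma(\beta)}$ is a fixed algebraic number, confining $x$ to one of finitely many proper $\QQ$-subspaces of $\QQ^3$; on each such subspace the equation becomes a norm-form equation in at most two variables, i.e.\ a Thue or binary quadratic equation over a number field, which is effectively solvable by Baker's method. Taking the largest of the finitely many bounds completes the proof.

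I expect the extraction of $\Lambda$ to be the main obstacle. The obvious attempt --- taking a dependence among four conjugates and isolating the largest --- produces a three-term unit equation, which is known to have finitely many solutions only through the ineffective Subspace Theorem. The totally complex hypothesis is what circumvents this: it forces the largest and the smallest conjugate of $\beta$ each to appear twice, so that two of our four conjugates are large and two are small, and a single division collapses the four-term dependence to the two-term form $\beta^\sigma \approx \mu\,\beta^{\bar\sigma}$ that linear forms in logarithms can control. The rest is bookkeeping: keeping the regulator estimate, the $d_i$, the integer $\ell$, and the subspace reduction effective and uniform over the finitely many cases.
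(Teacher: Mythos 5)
Your argument is correct, and it reaches the conclusion by a genuinely different route than the paper. The paper works with the full $(n-3)\times n$ matrix $A$ of linear relations among all conjugates, invokes Vojta's rank lemma to control the row-reduced shape of $A$, and then runs a case analysis to show that even the \emph{smallest} conjugate $u_n$ satisfies $\lVert u_n\rVert_\nu \ge c\lVert u_1\rVert_\nu h(\overline u)^{-d}$, which contradicts $\lVert u_n\rVert_\nu<1$ coming from the product of conjugates being $\pm1$. You instead extract a \emph{single} dependence among the four extremal conjugate forms $L_\sigma,L_{\bar\sigma},L_\tau,L_{\bar\tau}$ (two largest, two smallest), use the totally complex pairing to know the largest and smallest each occur twice, discard the two negligible terms, and land directly on a two-term linear form in logarithms with coefficients $\ll\log H$ — a cleaner endgame that avoids the rank lemma and the row-reduction casework entirely. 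Both proofs rest on the same two pillars (conjugate pairing from total complexity, plus Baker/Matveev), and both handle degeneracy by dropping to lower-dimensional norm forms: the paper splits the system into two- and three-term unit equations, while you restrict to a rational subspace and get a binary norm form. Two small imprecisions worth fixing: your parenthetical justification that the conjugate vectors are independent ("a polynomial of degree $<n$ with $n$ roots") is the argument for a power basis, whereas for general $\alpha_j$ one should extend to a $\QQ$-basis and use the non-vanishing of its discriminant, as the paper does; and in the degenerate case the binary form may be (a power of) an indefinite quadratic form, so it is solved by Pell/reduction theory rather than Baker and may contribute an infinite — but explicitly describable — family, which is exactly what the theorem's notion of ``effectively determined'' permits.
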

Next, we give a complete and effective resolution to the question of norm-form equations over totally complex Galois number fields of degree $6$. In this regard, we prove
\begin{theorem}\label{sexticthm}
Let $K$ be a totally complex Galois field with $n = [K:\QQ] = 6$ and let $\alpha_1,\ldots,\alpha_5$ be $\QQ$-linearly independent elements of $K$ such that the ratios $\alpha_j/\alpha_k$ generate $K$ over $\QQ$. Then, for any fixed integer $m$, the solutions of the norm-form equation
\begin{equation}\label{sexticform}
N_{K/\QQ}(x_1\alpha_1 + x_2\alpha_2 + x_3\alpha_3 + x_4\alpha_4 + x_5\alpha_5) = m
\end{equation}
can be effectively determined.
\end{theorem}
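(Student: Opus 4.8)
The plan is to reduce equation \eqref{sexticform} over a totally complex Galois sextic $K$ to finitely many instances of the three-variable case already handled by Theorem \ref{nongal}. The key structural fact is that a totally complex Galois sextic field has Galois group either $\ZZ/6\ZZ$ or $S_3$ (the latter being dihedral of order $6$), and in either case $K$ contains a subfield of index $2$, hence contains a CM subfield or -- more usefully -- we can locate the action of complex conjugation inside $\mathrm{Gal}(K/\QQ)$ as a central element of order $2$ (since $K$ is totally complex, complex conjugation is a well-defined element of the Galois group, and for degree $6$ it must be central). This gives a quadratic subfield $F$ fixed by complex conjugation with $K/F$ of degree... no: I would instead use that the fixed field of complex conjugation is a totally real cubic subfield $F_0$, so that $K = F_0(\sqrt{-d})$ for a suitable totally positive $d \in F_0$, or equivalently $K$ is a CM field with maximal totally real subfield $F_0$ of degree $3$.

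First I would fix an embedding and write $\xi = x_1\alpha_1 + \cdots + x_5\alpha_5$, and consider its conjugates $\xi^{(1)},\ldots,\xi^{(6)}$, paired by complex conjugation as $\xi^{(i)}, \overline{\xi^{(i)}}$ for $i=1,2,3$. The norm is $\prod_{i=1}^3 |\xi^{(i)}|^2 = m$, so each $|\xi^{(i)}|$ is bounded above by $\sqrt{|m|}$ times a constant, and also bounded below (since the product is fixed and each factor is at most polynomially large in the variables only if... ) -- more precisely, the standard Liouville-type estimate shows that if the solution is large, then some conjugate $|\xi^{(i)}|$ is small, and one exploits this. The crucial step, following Vojta's pigeonhole strategy, is: among the five coordinates $x_1,\ldots,x_5$, a box/pigeonhole argument produces a nonzero integer vector in a rank-$2$ sublattice on which the relevant linear form is small; but to invoke Theorem \ref{nongal} I instead want to cut down to three variables. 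The plan is to use that over the cubic field $F_0$, the element $\xi$ together with its conjugates generate at most a three-dimensional $F_0$-vector space inside $K \otimes \RR$ in the appropriate sense, so that after a change of basis the problem collapses: writing $K$ as a rank-$3$ module over the totally real cubic $F_0$ is not possible ($K/F_0$ has degree $2$), so instead I view $\xi$ as lying in the $2$-dimensional $F_0$-vector space $K$ and use that a norm-form over $F_0$ in $2$ variables is effectively solvable by Baker's theorem for relative Thue equations -- combined with running over the finitely many ideals of norm dividing $m$ in $F_0$.

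The honest reduction I would carry out: decompose $N_{K/\QQ} = N_{F_0/\QQ} \circ N_{K/F_0}$. Since $\alpha_1,\ldots,\alpha_5$ are $\QQ$-linearly independent in the $6$-dimensional $\QQ$-space $K$, and $[K:F_0]=2$, the $\alpha_j$ span $K$ as an $F_0$-space; choosing an $F_0$-basis $\{1,\omega\}$ of $K$ we may write $\xi = A + B\omega$ where $A,B \in F_0$ are $\QQ$-linear forms in $x_1,\ldots,x_5$ with coefficients in $F_0$. Then $N_{K/F_0}(\xi) = A^2 - A B \operatorname{Tr}_{K/F_0}(\omega) + B^2 N_{K/F_0}(\omega) \in F_0$ is a binary quadratic form in $A,B$, and $N_{F_0/\QQ}$ of it equals $m$. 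For each divisor structure (each ideal $\mathfrak{a}$ of $\mathcal{O}_{F_0}$ with $N_{F_0/\QQ}(\mathfrak{a}) \mid m$, finitely many), the equation $N_{K/F_0}(\xi) = \mu$ for the finitely many associated $\mu \in F_0$ is a relative Thue-type equation whose solutions $(A,B)$ -- up to units of $F_0$, and there is a fundamental system of $2$ such units -- can be effectively bounded by Baker's method over $F_0$ (this is exactly Gy\H ory--Bugeaud--Gy\H ory territory, since $[K:F_0]=2$... wait, that requires degree $\ge 3$).

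Here is the actual difficulty, and where Theorem \ref{nongal} must enter. Because $[K:F_0] = 2 < 3$, the relative binary form $N_{K/F_0}$ is not directly amenable to Baker's method, and one genuinely needs the totally-complex pigeonhole input. So the real plan is: the three archimedean places of $F_0$ each give, after composing with the place of $K$ above it, a single complex absolute value $|\cdot|_i$ on $K$; so $\xi$ determines a point $(|\xi|_1, |\xi|_2, |\xi|_3)$ with $\prod |\xi|_i^2 = |m|$. The two-dimensional unit lattice of $F_0$ acts, and logarithmic coordinates place $\log|\xi|_i$ in a $2$-dimensional space. Vojta's pigeonhole argument, applied to the $5$ unknowns $x_1,\ldots,x_5$ grouped via this rank-$2$ structure, produces -- from any sufficiently large solution -- a solution of a companion \emph{three-variable} norm-form equation over $K$ (with new, explicitly bounded coefficients $\alpha'_j$ satisfying the generation hypothesis), to which Theorem \ref{nongal} applies and yields an effective bound. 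The main obstacle, which I expect to occupy the bulk of the argument, is verifying that the three-variable equation produced by the pigeonhole step still satisfies the hypothesis that the ratios of its coefficients generate $K/\QQ$ -- this can fail, and one must handle the exceptional sublattices separately, showing that on each such sublattice the original equation degenerates to a lower-dimensional norm-form (a Thue equation or a $3$-variable equation in a proper subfield), which is then effectively solvable by induction on the degree. Tracking these degenerate subspaces explicitly, and bounding their number and the size of their defining vectors, is the crux; the rest (composition of norms, running over finitely many ideal classes and unit cosets, assembling the final effective bound) is routine bookkeeping.
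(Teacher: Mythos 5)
There is a genuine gap here, on two counts. First, your structural reduction rests on the claim that complex conjugation is a central element of $\mathrm{Gal}(K/\QQ)$, so that $K$ is a CM field with a totally real cubic subfield $F_0$. This is false in general: a totally complex Galois sextic field can have Galois group $S_3$, whose centre is trivial, in which case the complex conjugations attached to the three archimedean places are three distinct (conjugate) involutions and $K$ is not CM. The paper itself points this out in its discussion of infinite families (cubic subfields need not be totally real when $K$ is not CM), so the entire $N_{K/\QQ} = N_{F_0/\QQ}\circ N_{K/F_0}$ decomposition is unavailable in general. Second, even where you abandon that route, the step that would actually carry the proof --- producing from a large solution of the five-variable equation an effectively bounded instance of a \emph{three}-variable norm-form equation to which Theorem \ref{nongal} applies --- is asserted rather than constructed; you correctly observe that the relative binary form over a cubic subfield is out of reach of Baker's method because the relative degree is $2$, but you do not supply a replacement mechanism, and you explicitly defer the crux (the degenerate sublattices) to future work.

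For comparison, the paper's proof runs along an entirely different track. Because five $\QQ$-linearly independent $\alpha_j$ sit inside a degree-six field, the six conjugate vectors satisfy exactly one linear relation, so a solution with $m=\pm1$ yields a \emph{single} six-term unit equation $a_1u_1+\cdots+a_6u_6=0$ in the conjugates $u_j$ of the unit $x_1\alpha_1+\cdots+x_5\alpha_5$. Fixing an archimedean place $\nu$, ordering the $u_j$ and pairing them by complex conjugation at $\nu$, Matveev's bound forces $a_1u_1+a_2u_2$ (if nonzero) to be large, hence $u_3,u_4$ are large at $\nu$ and only the conjugate pair $u_5,u_6$ can be small there; transitivity of the Galois action on places then shows $u_5/u_6$ has polynomially bounded absolute value at \emph{every} place, so the pair can be ``matched'' into a single term with a coefficient of logarithmically small height. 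Doing this for each of the three conjugate pairs collapses the six-term equation to a three-term unit equation with slowly varying coefficients, which Lemma \ref{threetermlemma} solves effectively; vanishing subsums account for the infinite families. This matching idea is the essential content of the theorem, and nothing in your proposal substitutes for it.
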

In both these theorems, by ``effectively determined'' we mean that the solution set is the union of
\begin{enumerate}[(i)]
\item finitely many families of solutions, which can each be explicitly described by a combination of linear and congruence conditions on the exponents of two chosen fundamental units, and
\item finitely many solutions outside the above families, whose height can be bounded by an effective constant.
\end{enumerate}
See Section \ref{families} for a discussion of how these families relate to the description of \cite{schmidt1972}. The LMFDB \cite{lmfdb} provides a ready source of examples of totally complex Galois sextic fields-- at the time of writing, there are 114196 such fields in the database.

Finally, we are able to extend these methods to handle four variables over higher degree fields in the case of some special norm form equations -- namely those in a so-called `power basis'. We have
\begin{theorem}\label{generalthm}
Let $K$ be a totally complex Galois field with $[K:\QQ] \ge 6$ and let $\alpha$ be a primitive element of $K$, i.e. we have $K = \QQ(\alpha)$. Let $m$ be any fixed rational  integer. The solutions to the norm-form equation
\begin{equation*}
N_{K/\QQ} (x_1 + x_2\alpha + x_3\alpha^2 + x_4\alpha^3) = m
\end{equation*}
can be effectively determined.
\end{theorem}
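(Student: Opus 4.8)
The plan is to run a Baker--Vojta style descent on the unit part of a solution, reducing everything to the combinatorics of exponent vectors of fundamental units. After clearing denominators I may assume $\alpha\in\mathcal{O}_K$, at the cost of restricting $(x_1,\dots,x_4)$ to a finite-index sublattice of $\ZZ^4$ and replacing $m$ by a fixed multiple; this changes nothing essential below. A solution then produces $\beta = x_1 + x_2\alpha + x_3\alpha^2 + x_4\alpha^3 \in \mathcal{O}_K$ with $N_{K/\QQ}(\beta)=m$; since $K$ is totally complex, $N_{K/\QQ}(\gamma)$ is a product of $n/2$ squared complex absolute values and so is non-negative for every $\gamma\in K$, so there is nothing to do unless $m>0$. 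As there are only finitely many ideals of $\mathcal{O}_K$ of norm $m$, hence finitely many principal ones, we may write $\beta=\mu\varepsilon$ with $\mu$ ranging over an explicitly computable finite set and $\varepsilon\in\mathcal{O}_K^\times$. Fixing $\mu$ and writing $\varepsilon=\zeta\,\eta_1^{b_1}\cdots\eta_r^{b_r}$, with $\zeta$ a root of unity, $\eta_1,\dots,\eta_r$ a fixed system of fundamental units and $r=n/2-1\ge 2$ (here ``totally complex'' and $n\ge 6$ both enter), the problem becomes: for each of the finitely many pairs $(\mu,\zeta)$, describe the set of $(b_1,\dots,b_r)\in\ZZ^r$ for which $\mu\zeta\,\eta_1^{b_1}\cdots\eta_r^{b_r}$ lies in the $\ZZ$-span of $1,\alpha,\alpha^2,\alpha^3$.

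The engine of the proof is the system of linear relations among the conjugates. Because $\beta^{(i)}=x_1+x_2\alpha^{(i)}+x_3(\alpha^{(i)})^2+x_4(\alpha^{(i)})^3$, the vector $(\beta^{(1)},\dots,\beta^{(n)})$ lies in a fixed $4$-dimensional subspace of $\CC^n$ determined by the $\alpha^{(i)}$; hence any five of the conjugates satisfy an explicit linear relation whose coefficients are $4\times4$ Vandermonde minors in the $\alpha^{(i)}$ and are therefore all nonzero, $\alpha$ being a primitive element. This is exactly where the power basis hypothesis is used: it makes these coefficients Vandermonde-clean and, with $K$ Galois, makes them lie in $K$ and transform uniformly under the Galois group. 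Alongside these relations one has, from totally complex Galois, $|\beta^{(i)}|=|\overline{\beta^{(i)}}|$ for each $i$ and $\prod_{i=1}^n|\beta^{(i)}|=m$.

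Next I would exploit the dominant conjugate. For fixed $(\mu,\zeta)$ the map $(b_1,\dots,b_r)\mapsto(\log|\beta^{(i)}|)_i$ is affine, so $\RR^r$ decomposes into finitely many polyhedral cones on the interior of each of which a single conjugate $\beta^{(i_0)}$ strictly dominates the rest, with $\log|\beta^{(i_0)}|$ growing linearly in $\|(b_1,\dots,b_r)\|$. A length-five relation through $i_0$ and four further conjugates, none of which is the complex conjugate of $\beta^{(i_0)}$ --- possible precisely because $n\ge 6$ --- forces $|\beta^{(i_0)}|$ to be at most a fixed constant times the largest of those four, so a second, genuinely distinct conjugate is comparable to the maximum. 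Iterating this kind of estimate (using, when needed, a second independent conjugate relation, available since the $n\ge 6$ conjugates span only a $4$-dimensional space) shows that a solution with $\|(b_1,\dots,b_r)\|$ large must lie within bounded distance of one of finitely many rational hyperplanes in $\RR^r$ --- the ``walls'' of the cone decomposition, where the maximum is shared by a conjugate and a non-conjugate one. Outside these slabs and outside a bounded region there are no solutions, with an effective bound coming either from the explicit geometry of the cone decomposition (a gap principle in the spirit of Vojta's argument) or, where a genuine linear form in logarithms of units appears, from Baker's theorem.

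It remains to analyse the slabs; this is where the families of (i) and the real difficulty live. On such a slab two conjugates $\beta^{(i)},\beta^{(j)}$ that are not complex conjugates of each other are comparable, and combining their ratio with complex conjugation of the defining relations and a further Vandermonde relation yields a rigid constraint with a dichotomy. Either $(b_1,\dots,b_r)$ is pinned to within bounded distance of a rational subspace of strictly smaller dimension --- in which case the whole argument is iterated there, the dimension strictly decreasing --- or an exact multiplicative identity holds, forcing $\beta$ into a coset $\gamma\cdot k$ of a proper subfield $k\subseteq\QQ(\alpha)$ contained in the $\QQ$-span of $1,\alpha,\alpha^2,\alpha^3$. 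On such a coset $N_{K/\QQ}=N_{k/\QQ}^{[K:k]}$, so the solutions sweep out a coset of the group of units of $k$ lying in that span --- a group of rank at most two --- which is precisely a family of the required shape, described by linear and congruence conditions on the exponents of two fundamental units, the congruences absorbing $\zeta$, $\mu$, the sublattice condition on $(x_1,\dots,x_4)$ and the norm-sign condition $N_{k/\QQ}(\cdot)^{[K:k]}=1$. Since the descent terminates after finitely many steps, every solution outside these finitely many families has $\|(b_1,\dots,b_r)\|$, and hence height, effectively bounded. I expect the dichotomy in this last step to be the main obstacle: passing rigorously and with explicit constants from the analytic assertion that two conjugates are nearly proportional to the algebraic assertion that $\beta\in\gamma k$, verifying that the loci so produced are genuine solution families rather than spurious near-solutions, keeping the bounds under control through the iterated descent, and handling each of the finitely many subfield configurations that can occur once $n\ge 6$.
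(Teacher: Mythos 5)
Your setup is sound and runs parallel to the paper's through the first key estimate: reducing to $\beta=\mu\varepsilon$ with $\mu$ in a finite computable set, observing that any five conjugates of $\beta$ satisfy a linear relation whose coefficients are nonzero Vandermonde minors (this is exactly the paper's rank condition that any four columns of $B$, equivalently any $n-4$ columns of $A$, have full rank), and using a five-term relation through the dominant conjugate and four others avoiding its complex conjugate to conclude that $n-2$ conjugates are comparable to the maximum at every infinite place. From that point on there is a genuine gap. The load-bearing claims --- that iterating these estimates forces any large solution to within bounded distance of finitely many rational hyperplanes, and that on each such slab one obtains a clean dichotomy between a descent to lower dimension and an exact subfield identity --- are asserted rather than proved, and you yourself flag the dichotomy as the main obstacle. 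The difficulty is real: the triangle-inequality estimates leave two conjugates (a complex-conjugate pair) at each place entirely uncontrolled, and those two can absorb all of the decay demanded by the product formula, so nothing in what you have written rules out large solutions away from your walls.

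What is missing is the specific idea the paper calls \emph{matching}. By transitivity of the Galois action on conjugates and places, the pair of conjugates that is smallest at a given place $\nu$ is large (comparable to the maximum up to a factor $h(\overline u)^{O(1)}$) at every other place, while at $\nu$ itself the two are complex conjugates and hence of equal absolute value; therefore their ratio is bounded by $h(\overline u)^{O(1)}$ at \emph{all} places, so it has height $O(\log h(\overline u))$, and the two terms $a_iu_i+a_ju_j$ collapse into a single term $au$ with $h(a)\ll \log h(\overline u)$. Carrying this out for the pair attached to each place halves the number of variables, and the rank condition you already established is then used a second time --- also absent from your proposal --- to guarantee that after the collapse each equation in the system retains at most three nonzero terms. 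Baker's theorem for three-term unit equations with slowly varying coefficients (Lemma \ref{threetermlemma}) then gives the effective bound, and the infinite families arise precisely from vanishing subsums $a_iu_i+a_ju_j=0$, which is where your subfield-coset picture becomes correct and provable. Without the matching step, or a fully worked substitute for it, the argument does not close.
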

In the case of Theorem \ref{generalthm}, any (infinite) families of solutions can again be given by sets of linear conditions on the exponents, along with a possible congruence condition arising from the torsion in the unit group of the number field. ``Effectively determined'' means, as before, that these families can be explicitly described, and the heights of solutions outside these families can be bounded by an effective constant. The requirement that $K$ be Galois and the ratios $\alpha_i/\alpha_j$ generate $K/\QQ$ ensures that the resulting norm form is irreducible. The proofs of Theorems \ref{sexticthm} and \ref{generalthm} both utilise the technique of ``matching'' introduced in \cite{bajben}.

\section{Preliminaries}\label{prelim}

Let $K$ be a Galois number field, set $n = [K:\QQ]$ and let $\alpha_1,\ldots,\alpha_k\; (k<n)$ be $\QQ$-linearly independent elements of $K$ such that the ratios $\alpha_i/\alpha_j$ generate $K$ over $\QQ$. Let $\mathrm{Gal}(K/\QQ) = \{1 = \sigma_1,\ldots,\sigma_n\}$ be the Galois group of $K$ over $\QQ$ and define $B$ to be the matrix
\[
\begin{pmatrix}
\sigma_1(\alpha_1) & \sigma_2(\alpha_1) & \cdots & \sigma_n(\alpha_1) \\
\sigma_1(\alpha_2) & \sigma_2(\alpha_2) & \cdots & \sigma_n(\alpha_2) \\ 
\vdots & \vdots & \ddots & \vdots \\ 
\sigma_1(\alpha_k) & \sigma_2(\alpha_k) & \cdots & \sigma_n(\alpha_k) 
\end{pmatrix}
\]
so that the condition $\mathrm{disc}(K) \neq 0$ implies that $B$ has rank $k$. The columns of $B$ therefore satisfy $n-k$ linear relations. Let $A$ be an $(n-k)\times n$ matrix (of rank $n-k$) where each row gives one such relation. Setting
\[
\mathcal{L}^{\sigma} = x_1\sigma(\alpha_1) + \cdots + x_k\sigma(\alpha_k)
\] 
for any $\sigma \in \mathrm{Gal}(K/\QQ)$, we see that the matrix $A=(a_{ij})$ leads to a system of $n-k$ equations among the linear forms $\mathcal{L}^\sigma$. Specifically, we obtain equations of the shape
\[
a_{i1} \mathcal{L}^{\sigma_1} + a_{i2} \mathcal{L}^{\sigma_2} + \cdots + a_{in} \mathcal{L}^{\sigma_n} =0 \, , \quad 1 \le i \le n-k.
\]
Now consider the norm-form equation
\begin{equation}\label{gennorm}
N_{K/\QQ} (x_1\alpha_1 + \cdots + x_k\alpha_k) = m
\end{equation}
where $m$ is some fixed integer. Letting $d$ be the smallest positive integer such that $d\alpha_j$ is in the ring of integers $\mathcal{O}_K$ of $K$ for each $1\le j \le k$, we see that solutions to the above equation are equivalent to the solutions to
\[
N_{K/\QQ} (x_1\cdot d\alpha_1 + \cdots + x_k\cdot d\alpha_k) = md^n
\]
so we may as well restrict \eqref{gennorm} to the case where the $\alpha_j$ are all in $\mathcal{O}_K$. Then all the $\mathcal{L}^\sigma$ represent integral elements of $K$. Next we note that, up to units, there are only finitely many $\mu \in \mathcal O_{K}$ with $N_{K/\QQ}(\mu) = m$, and so we can write $x_1\alpha_1 + \cdots + x_k\alpha_k = \mu u$ with $\mu$ chosen from a finite (effectively bounded) set and $u$ a unit in $\mathcal{O}_K$. This allows us to replace the matrix $A$ with a finite collection of matrices
\[
A_\mu = \Big(a_{ij}\sigma_j(\mu) \Big),
\]
one for each choice of $\mu$, now giving equations to be solved in \emph{units} of $\mathcal{O}_K$. Thus we can reduce the question of solving \eqref{gennorm} to the question of solving certain systems of unit equations (for the proof of Theorem \ref{generalthm} we will use some statements about the rank of submatrices of $A$, and we will show there that these rank conditions continue to hold for $A_\mu$). Writing $\mathcal{L}^{\sigma_i} = u_i$, each of the $A_\mu$ gives a system of unit equations of the form
\begin{equation}\label{basicunit}
a_{i1} u_1 + a_{i2} u_2 + \cdots + a_{in} u_n = 0\, , \quad 1 \le i \le n-k.
\end{equation}

If $k=2$ (the case of Thue equations), then we obtain a system of $n-2$ equations in $n$ variables. After row-reduction each equation is left with only three non-zero terms terms. Three term unit equations can all be effectively solved following Baker's method, and so, as is well-known, the effective resolution of Thue equations is a straightforward matter.

If $k=3$, then each equation has four terms after row reduction. In this case, if $K$ is assumed to be totally complex then Vojta \cite{vojta} was able to show via a pigeonhole argument that again the solutions may be effectively determined. The key point is to use Baker's theorem to show that three terms in each equation are necessarily ``large'' at every place (i.e. comparable to the largest term), and so if the system is suitably generic then in fact one term ends up being large at all places. This of course contradicts the product formula for places of a number field -- or, more simply, the fact that the product of all conjugates was assumed to equal $\pm 1$, since the terms are units. The ``not-generic'' case is dealt with by reducing to a smaller system of equations that is usually simpler.

The ``matching'' procedure, described below, allows us to reduce the number of terms in the unit equations that arise from a given norm-form -- at the cost of a small increase in height of the coefficients. For example, in the sextic case we go immediately from a six-term equation to a three-term equation, and the increase is small enough that the new equation can still be solved by Baker's method. A more generalized matching procedure then allows us to address the situation in Theorem \ref{generalthm}. The essential workings of the proof are the same as in the sextic case.

Before proceeding, we state a result on lower bounds for linear forms in complex logarithms that we will need. For $\alpha\in K$ with $[K : \mathbb{Q}]=n$,  we define the absolute logarithmic height of $\alpha$ as
\[
h(\alpha) = \frac{1}{n}\left( \log|a| + \sum_{i=1}^{n}\log\max\{1,|\alpha^{(i)}|\}  \right),
\]
where $a$ is the leading coefficient of the minimal polynomial of $\alpha$ over $\ZZ$, and the $\alpha^{(i)}$ are the conjugates of $\alpha$. For linear forms in an arbitrary number of complex logarithms, the following result of Matveev \cite{matveev} is essentially the best known:

\begin{theorem}[Matveev, 2000]\label{matveev}
Let $\alpha_1,\ldots,\alpha_m$ be non-zero elements of $K$, where $[K : \mathbb{Q}]=n$, and $b_1,\ldots b_m$ be integers such that
\[
\Lambda = |b_1\log \alpha_1 + \cdots + b_m\log \alpha_m| \neq 0.
\]
If $K\subseteq\RR$ put $\varkappa=1$, else put $\varkappa=2$. Let $B = \max\{|b_1|,\ldots,|b_m|\}$ and 
\[
A_j \ge \max\{nh(\alpha_j),|\log\alpha_j|,0.16)\},\quad 1\le j\le m .
\]
Then we have
\[
\log|\Lambda| \ge -\frac1\varkappa\left( em \right)^\varkappa 30^{m+3}m^{3.5}n^2\log(en)\log(eB)A_1\cdots A_m.
\]
\end{theorem}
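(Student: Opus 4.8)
The statement is a theorem of transcendence theory whose full proof lies well outside the scope of these preliminaries; here I only indicate the shape of the argument one would give, which follows the Gel'fond--Baker method in the sharpened form due to Matveev. The plan is to argue by contradiction, assuming that $\log|\Lambda|$ is smaller than the claimed bound and deriving an impossibility.

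First I would set up the auxiliary construction. After reindexing so that $b_m$ is extremal, write $\beta_i = -b_i/b_m$, so that $\log\alpha_m = \sum_{i<m}\beta_i\log\alpha_i + \Lambda/b_m$, the last term being tiny by hypothesis. Using Siegel's lemma (a pigeonhole count over $\mathcal{O}_K$, which is where the degree factor $n^2\log(en)$ and the height parameters $A_j$ enter), I would produce a nonzero $P\in\ZZ[X_1,\ldots,X_m]$ with degree at most $L_i$ in $X_i$ and coefficients of controlled size, such that the entire function
\[
\Phi(z)=P\big(\alpha_1^{z},\ldots,\alpha_{m-1}^{z},\alpha_m^{z}\big)
\]
together with its derivatives up to some order $T$ very nearly vanishes -- exactly vanishes up to the error hidden in $\Lambda/b_m$ -- at $z=0,1,\ldots,Z_0$. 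The parameters $L_i,T,Z_0$ are chosen so the number of monomials of $P$ exceeds the number of linear conditions, and -- crucially for the quality of the final bound -- the $L_i$ are taken unequal, weighted inversely to the $A_i$.

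Next I would run the analytic extrapolation. Since $\Phi$ is entire of finite order with explicitly bounded Taylor coefficients and is small (with multiplicity $T$) at $Z_0+1$ consecutive integers, a Hermite-type interpolation formula integrated over a circle of larger radius, combined with the maximum modulus principle, forces $\Phi$ and its low-order derivatives to be small on a strictly larger set of integers; the factor distinguishing the real case ($\varkappa=1$) from the complex case ($\varkappa=2$) reflects how efficiently this Schwarz-lemma step applies. Iterating the extrapolation -- each round trading multiplicity for additional points -- I would conclude that $\Phi^{(t)}(s)$ is far smaller than the inverse of any plausible denominator for all $0\le t<T'$ and $0\le s\le Z_1$ with $Z_1\gg Z_0$; as these are algebraic numbers of controlled height and denominator, they must vanish identically, so $P$ vanishes to order $T'$ at each point $(\alpha_1^s,\ldots,\alpha_m^s)$.

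Finally I would invoke a sharp zero (multiplicity) estimate: a nonzero polynomial of these degrees cannot vanish to order $T'$ at so many points $(\alpha_1^s,\ldots,\alpha_m^s)$ unless the $\alpha_i$ satisfy a multiplicative relation of small height, which a separate and easier analysis shows to be incompatible with $\Lambda\neq 0$ under the assumed smallness of $|\Lambda|$. Either branch yields a contradiction, and unwinding the inequalities produces the stated lower bound. I expect the main obstacle to be quantitative rather than structural: the whole architecture above is classical, but arranging the bookkeeping so that the dependence on $B$ is a single factor $\log(eB)$ (rather than $(\log B)^m$) and the dependence on $m$ is only $30^{m+3}m^{3.5}$ demands a near-optimal choice of the weighted degrees $L_i$, an efficient extrapolation schedule, and a zero estimate with essentially best-possible constants. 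That optimisation is the technical heart of Matveev's argument, and for our purposes we simply quote it.
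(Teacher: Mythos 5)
The paper offers no proof of this statement: it is quoted verbatim as an external result of Matveev, with a citation standing in for the argument. Your high-level sketch of the Gel'fond--Baker method (auxiliary polynomial via Siegel's lemma, analytic extrapolation, zero estimate) is an accurate outline of how such bounds are established, and your conclusion that the result should simply be quoted is exactly the paper's treatment, so there is nothing to reconcile.
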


Finally, for a solution $\overline u = (u_1,u_2,\ldots,u_n)$ to a unit equation such as \eqref{basicunit} define
\[
H(\overline u) = \prod_\nu \max \{ \lVert u_1 \rVert_\nu , \lVert u_2 \rVert_\nu, \ldots, \lVert u_n \rVert_\nu \}
\]
and $h(u) = \log H(u)$.

\subsection{Proof of Theorem \ref{nongal}}
In this section, we give a simiplified proof of Vojta's theorem from \cite{vojta} that does not require the field $K$ to be Galois over $\QQ$. This will give us Theorem \ref{nongal}. So let $K, \alpha_1,\alpha_2,\alpha_3$ be as in the statement of Theorem 1 and consider
\[
N_{K/\QQ}(x_1\alpha_1 + x_2\alpha_2 + x_3\alpha_3) = m.
\]
Immediately we restrict our attention to the case $\alpha_1,\alpha_2,\alpha_3 \in \mathcal O_K$ since this can be ensured by suitably altering $m$, as described above. Next we restrict to the case $m = \pm 1$, from which the general case will follow. Under these restrictions $u = x\alpha_1 + y\alpha_2 + z\alpha_3$ is a unit in $K$. Let $L/K$ be a normal closure of $K$, $\sigma_1, \ldots, \sigma_n$ be the $n$ distinct embeddings of $K$ into $L$ and set $u_i = \sigma_i(u)$. Set $B$ to be the matrix
\[
B = \begin{pmatrix}\sigma_1(\alpha_1) & \sigma_2(\alpha_1) & \cdots & \sigma_n(\alpha_1) \\
\sigma_1(\alpha_2) & \sigma_2(\alpha_2) & \cdots & \sigma_n(\alpha_2) \\
\sigma_1(\alpha_3) & \sigma_2(\alpha_3) & \cdots & \sigma_n(\alpha_3) 
\end{pmatrix}
\]
so by the non-vanishing of the discriminant of $K$ we know that $B$ has rank $3$. Let $A = (a_{ij})$ be an $(n-3)\times n$ matrix of linear relations satisfied by the columns of $B$ and note this implies we have the $n-3$ linear equations
\[
a_{i1} u_1 + a_{i2} u_2 + \cdots + a_{in}u_n \, , \qquad 1 \le i \le n-3
\]
in units of $L$. Let $\overline u$ denote such a potential solution $(u_1,\ldots, u_n)$. We begin by stating Lemma 2.19 of \cite{vojta} which is the following
\begin{lemma}[Vojta] \label{lemvoj}
Let $A$ and $B$ be $r\times n$ and $s\times n$  matrices, respectively, with $r+s = n$. Assume $A$ and $B$ have rank $r$ and $s$, respectively, and that rows of $A$ are orthogonal to rows of $B$. Finally, assume the last $s$ columns of $B$ have rank $s$. Then the first $r$ columns of $A$ have rank $r$.
\end{lemma}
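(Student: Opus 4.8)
The plan is to prove this by a dimension count on row spaces followed by a short argument by contradiction. I would fix a field $F$ containing all the entries of $A$ and $B$ (rank is unaffected by such an extension), let $V\subseteq F^n$ be the row space of $A$ and $W\subseteq F^n$ the row space of $B$, so $\dim V = r$ and $\dim W = s$. The orthogonality hypothesis gives $V\subseteq W^{\perp}$, and since $\dim W^{\perp} = n-s = r = \dim V$, in fact $V = W^{\perp}$; this is the one place where the hypothesis $r+s=n$ is essential. Partition the column indices into $I = \{1,\ldots,r\}$ and $J=\{r+1,\ldots,n\}$, and for a matrix $M$ with $n$ columns let $M_I$, $M_J$ denote the submatrices on the columns in $I$, $J$. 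The hypothesis is then that $B_J$ is invertible, and the goal is to show $A_I$ is invertible.

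Next I would consider the linear map $\phi\colon F^r\to F^r$, $v\mapsto vA_I$. Since the rows of $A$ are independent, $v\mapsto vA$ is injective, so it is enough to check $\ker\phi=0$. Suppose $vA_I=0$ and set $w=vA\in V$. Then the coordinates of $w$ indexed by $I$ all vanish, so $w=(0,\ldots,0,w_J)$ for some $w_J\in F^s$. As $V=W^{\perp}$, the vector $w$ is orthogonal to every row of $B$, i.e. $Bw^{\mathsf T}=0$; but $Bw^{\mathsf T}=B_Iw_I^{\mathsf T}+B_Jw_J^{\mathsf T}=B_Jw_J^{\mathsf T}$ since $w_I=0$, and invertibility of $B_J$ forces $w_J=0$. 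Hence $w=0$, so $v=0$ by injectivity of $v\mapsto vA$. Therefore $\phi$ is injective, $A_I$ has rank $r$, and the proof is complete.

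I do not expect any genuine obstacle here: this is pure linear algebra and the only subtlety is bookkeeping --- correctly identifying which block of columns the hypotheses constrain and using $r+s=n$ to upgrade the inclusion $V\subseteq W^{\perp}$ to an equality. One could instead argue via nondegeneracy of the pairing between $V$ and the coordinate subspace spanned by the indices in $I$, but the contradiction argument above is cleanest and makes transparent where each hypothesis enters.
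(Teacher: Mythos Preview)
Your argument is correct and cleanly organized: the key step $V=W^{\perp}$ follows from $r+s=n$ and non-degeneracy of the standard bilinear form on $F^n$, and the contradiction argument using invertibility of $B_J$ is sound. Note, however, that the paper does not supply its own proof of this lemma --- it merely quotes it as Lemma~2.19 of Vojta's thesis and uses the statement --- so there is no in-paper argument to compare against; your proof is a perfectly acceptable self-contained verification of the cited result.
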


Recall, following \cite{vojta}, how this lemma implies that any $n-2$ columns of $A$ have rank $n-2$. Since the ratios $\alpha_i/\alpha_j$ generate $K$ over $\QQ$, any two columns of $B$ are linearly independent. Now choose $n-2$ columns of $A$, and let indices $k,\ell$ correspond to the columns not chosen. Then columns $k,\ell$ of $B$ are linearly independent, so there exists an index $m$ such that columns $k,\ell, m$ of $B$ give a submatrix of rank 3. We can reorder simultaneously the columns of $A$ and $B$ to put columns $k,\ell,m$ as the rightmost columns. Then the matrices $A$ and $B$ satsify the conditions of the above Lemma, so the first $n-3$ columns of (the reordering of) $A$ have rank $n-3$. To finish, we note that these $n-3$ columns were among the $n-2$ we chose initially.

Returning to our set of matrix equations, fix an infinite place $\nu$ of $L$ and an embedding $K\hookrightarrow L$. We may assume, after relabelling the $u_i$, that
\[
\lVert u_1 \rVert_{\nu} \ge \lVert u_2 \rVert_{\nu} \ge \cdots \ge \lVert u_n \rVert_{\nu}
\]
and, since $K$ is totally complex, that
\begin{equation}
\lVert u_1 \rVert_{\nu} = \lVert u_2 \rVert_{\nu},  \lVert u_3 \rVert_{\nu} = \lVert u_4 \rVert_{\nu}, \ldots \lVert u_{n-1} \rVert_{\nu} = \lVert u_n \rVert_{\nu}.
\end{equation}
After row reduction, our matrix $A$ has at most one non-zero entry $a_{ij}$ with $i < j < n-1$ since there is at most one `non-pivot' column among the first $n-2$. Let this be column $k$.  The first $k$ rows of $A$ look like
\[
\begin{pmatrix}
a_1 & &  & & b_1 & 0 & \cdots & 0  & c_1 & d_1 \\
 & a_2 & &  & b_2 & 0 & \cdots & 0  & c_2 & d_2\\
 &  & \ddots & & \vdots  & \vdots & \ddots  & \vdots  & \vdots   & \vdots \\
   &   & & a_k  & b_k  & 0 & \cdots & 0 & c_k   & d_k 
\end{pmatrix}
\]
while the last $n-3-k$ have zeroes in columns $\le k+1$ giving the following submatrix:
\[
\begin{pmatrix}
0&\cdots &0&  a_{k+1} & &  &  c_{k+1} & d_{k+1} \\
 \vdots&\ddots &\vdots&  &\ddots & & \vdots & \vdots \\
 0&\cdots &0&  & & a_{n-3}&  c_{n-3} & d_{n-3}
\end{pmatrix}
\]
with only three non-zero entries in each row.

If $b_1= 0$ then we have $c_1$ or $d_1$ non-zero, and so in particular that $| u_{n-1} |_{\nu} \gg | u_1 |_\nu$.  Next, if $a_1u_1 + b_1u_k = 0$ we have $|u_k|_\nu \gg |u_1|_\nu$ so we check $a_{i}u_i + b_{i}u_k$ for all $1<i<k$. If $b_i = 0$ in any of these cases again we get $| u_{n-1} |_{\nu} \gg | u_1 |_\nu$. If $a_{i}u_i + b_{i}u_k = 0$ for all $1<i<k$ then our system of equations splits into a collection of unit equations in two and three variables (for rows $i<k$ and rows $i>k$ respectively) which can all be solved effectively. So we may assume for some $1<i<k$ we have $a_{i}u_i + b_{i}u_k \neq 0$ and also $|u_k|_\nu \gg |u_1|_\nu$.

Then, using the bounds from Theorem \ref{matveev} we deduce
\[
\lVert u_{n-1} \rVert_{\nu} \ge c{\lVert u_k \rVert_\nu}h(\overline u)^{-d} \ge c'{\lVert u_1 \rVert_\nu}h(\overline u)^{-d}
\]
with effective constants $c,c',d$ depending on the $a_i$ and $K$ but not on the $u_i$. Now recall $\lVert u_{n-1}\rVert_\nu = \lVert u_n \rVert_\nu$ since $K$ is totally complex. Moreover we can suppose $\lVert u_n \rVert_\nu < 1$ since the product of the $u_i$ equals $1$ and all conjugates $u_i$ cannot lie on the unit circle unless they are roots of unity. Thus we get the inequality
\[
1 > \lVert u_n \rVert_{\nu} \ge c'{\lVert u_1 \rVert_\nu}h(\overline u)^{-d}
\]
which means $\lVert u_1\rVert_\nu$, and thus $h(\overline{u})$, is effectively bounded (note that $h(\overline u)$ is logarithmic in size compared to $u$).

To go from $m=\pm 1$ to arbitrary $m$, we recall as described above that we only have to consider finitely many collections of matrix equations $A_\mu$. Furthermore, these are all obtained from $A$ by multiplying each column by a different constant element of $L$. In particular, the condition that any $n-2$ columns of $A_\mu$ have rank $n-3$ remains true, since this condition holds for $A$.  Thus the same argument gives a bound on the heights of solutions also in the case of an arbitrary integer $m$.

\subsection{Matching Units}\label{matching}

The main tool in the remaining proofs is the ``matching'' procedure of \cite{bajben}. We reproduce the ideas here for completeness, with only slight modification so the procedure can be applied to more general systems of equations.

First, we prove a small lemma to show that three-term unit equations can be solved even if the coefficients in the equation are allowed to ``vary'' up to small height. The proof is essentially the same as the usual proof for three-term equations via Baker's method-- in particular see \cite{gyoryyu} for bounds in the usual case where the dependence on heights of (fixed) coefficients is made explicit.

\begin{lemma}\label{threetermlemma}
Let $u,v$ and $w$ be units in a number field $K$, write $\overline{u} = (u,v,w)$ and let $\alpha, \beta$ and $\gamma$ be non-zero elements of $K$ satisfying
\[
\max\{h(\alpha), h(\beta), h(\gamma)\} \le c + d \log h(\overline u)
\]
for some effective positive constants $c$ and $d$. Then the height of solutions to the unit equation
\[
\alpha u + \beta v + \gamma w = 0
\]
can be effectively bounded.
\end{lemma}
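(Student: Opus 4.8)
The proof is the standard Baker-type treatment of a three-term unit equation, as in \cite{gyoryyu}; the only new feature is that $\alpha,\beta,\gamma$ are allowed to grow with $\overline u$, and one must check that this growth---at most $\asymp\log h(\overline u)$---is too slow to interfere with the lower bound coming from Theorem \ref{matveev}. All effective constants below may depend on $K$, on a fixed fundamental system of units $\varepsilon_1,\dots,\varepsilon_r$ of $\mathcal O_K$, and on $c,d$. First I would normalise by dividing through by $u$: with $\xi=v/u$ and $\eta=w/u$, both units of $\mathcal O_K$, we get $\alpha+\beta\xi+\gamma\eta=0$ and $h(\overline u)=h(1:\xi:\eta)$. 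Since $\xi,\eta$ are units, this projective height is a sum over the archimedean places of $\log\max\{1,|\xi|_\nu,|\eta|_\nu\}$, so there is an archimedean place $\nu_0$ with $\max\{|\xi|_{\nu_0},|\eta|_{\nu_0}\}\ge e^{h(\overline u)}$. The height hypothesis bounds each of $|\alpha|_{\nu_0}$, $|\beta|_{\nu_0}^{\pm1}$, $|\gamma|_{\nu_0}^{\pm1}$ by a fixed power of $h(\overline u)$, so once $h(\overline u)$ is large the term $\alpha$ is negligible at $\nu_0$ against the larger of $\beta\xi,\gamma\eta$; as the three terms sum to zero, $\beta\xi$ and $\gamma\eta$ are then comparable at $\nu_0$ and
\[
\Bigl|\tfrac{\beta\xi}{\gamma\eta}+1\Bigr|_{\nu_0}=\Bigl|\tfrac{\alpha}{\gamma\eta}\Bigr|_{\nu_0}\le e^{-c_1 h(\overline u)}
\]
for an effective $c_1>0$. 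Hence, choosing branches suitably, $\Lambda:=\log(-\beta/\gamma)+\log\xi-\log\eta$ satisfies $|\Lambda|\le e^{-c_2 h(\overline u)}$ with $c_2>0$ effective; it is nonzero, since $\Lambda=0$ would force $\beta\xi+\gamma\eta=0$ and hence $\alpha=0$.

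Next I would rewrite $\Lambda$ as a genuine integer linear form in logarithms. Writing $\xi=\zeta_1\varepsilon_1^{a_1}\cdots\varepsilon_r^{a_r}$ and $\eta=\zeta_2\varepsilon_1^{b_1}\cdots\varepsilon_r^{b_r}$ with $\zeta_1,\zeta_2$ roots of unity, and fixing a primitive root of unity $\zeta_0\in K$, the ambiguities in $2\pi i\ZZ$ together with $\zeta_1/\zeta_2$ collect into a single term $b_0\log\zeta_0$, giving
\[
\Lambda=1\cdot\log(-\beta/\gamma)+\sum_{i=1}^{r}(a_i-b_i)\log\varepsilon_i+b_0\log\zeta_0 .
\]
Inverting the (fixed, nonsingular) regulator matrix bounds $\max_i|a_i|$ and $\max_i|b_i|$ by an effective multiple of $h(\overline u)$, and $|\mathrm{Im}\,\Lambda|\le 1$ then bounds $|b_0|$ likewise, so the quantity $B$ of Theorem \ref{matveev} satisfies $\log(eB)\le c_3+c_4\log h(\overline u)$. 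Of the corresponding $A_j$, those attached to $\varepsilon_1,\dots,\varepsilon_r$ and to $\zeta_0$ are constants depending only on $K$, while the one attached to $-\beta/\gamma$ is at most $c_5+c_6\log h(\overline u)$, since $h(-\beta/\gamma)\le h(\beta)+h(\gamma)\le 2(c+d\log h(\overline u))$ controls both $n\,h(-\beta/\gamma)$ and $|\log(-\beta/\gamma)|$.

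Applying Theorem \ref{matveev} to $\Lambda\neq0$ then gives, for $h(\overline u)$ large,
\[
\log|\Lambda|\ \ge\ -C_K\,\log(eB)\,A_1\cdots A_{r+2}\ \ge\ -C_K'\,\bigl(\log h(\overline u)\bigr)^2 ,
\]
since exactly one of the $A_j$ varies, and only like $\log h(\overline u)$. Comparing with the upper bound $\log|\Lambda|\le -c_2 h(\overline u)$ yields $c_2 h(\overline u)\le C_K'(\log h(\overline u))^2$, which bounds $h(\overline u)$---and hence the height of the solution $\overline u$---by an effective constant; the finitely many $\overline u$ for which $h(\overline u)$ is not ``large'' in the senses used above are trivially bounded.

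I expect the only delicate step to be the bookkeeping of the second paragraph: packaging $\Lambda$ as an honest integer linear form in logarithms of elements of $K$, keeping track of the branch and root-of-unity ambiguities (which is also what shows $\Lambda\neq0$), and confirming that the growth of $\alpha,\beta,\gamma$ enters Matveev's estimate through a single factor $A_j=O(\log h(\overline u))$ only. Beyond that there is no genuine obstacle: the argument is exactly the classical one for three-term unit equations, and the slow growth of the coefficients is absorbed by the gap between the archimedean lower bound for $|\Lambda|$, of size $\asymp h(\overline u)$, and Matveev's lower bound, of size $\asymp(\log h(\overline u))^2$.
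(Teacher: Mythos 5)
Your proposal is correct and follows essentially the same route as the paper: dehomogenize, locate an archimedean place where the unit coordinates are exponentially large compared to the polynomially-bounded coefficients, reduce to a nonvanishing linear form in logarithms whose only non-constant Matveev parameter is the single $A_j$ attached to $-\beta/\gamma$ (of size $O(\log h(\overline u))$), and compare the resulting $(\log h(\overline u))^2$ lower bound with the $e^{-c_2 h(\overline u)}$ upper bound. The only nitpick is that the pigeonhole step should give $\max\{|\xi|_{\nu_0},|\eta|_{\nu_0}\}\ge e^{h(\overline u)/s}$ with $s$ the number of archimedean places, which changes nothing in the conclusion.
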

\begin{proof}
 First, we dehomogenize the equation to consider instead
\[
\alpha u + \beta v = -\gamma
\]
noting that any solution $(u_1,v_1)$ to the above equation gives a primitive solution $(u_1,v_1,1)$ to the homogeneous equation -- i.e. a solution $(u_1,v_1,w_1)$ up to a common multiplicative factor. Next, since we are interested in effective bounds, we may as well restrict to considering only $\overline{u}$ of sufficiently large height -- say $h(\overline u) \ge A \gg 1$ for some positive constant $A$ to be determined.

Then there must be at least one infinite place $\nu$ of $K$ such that 
\[
\max\{|u|_\nu, |v|_\nu\} \ge h(\overline{u})^{1/s}\ge A^{1/s}
\]
where $s$ is the number of infinite places of $K$. Assuming without loss of generality that $|u|_\nu \ge |v|_\nu$ for this place $\nu$, it follows that if $A$ is large enough then $|u|_\nu \gg 1$. We can further ensure
\[
|v|_\nu \ge |\alpha/\beta|_\nu |u|_\nu - |\gamma/\beta|_\nu \gg 1
\]
as well, assuming $A$ large enough, since the coefficients $\alpha,\beta$ and $\gamma$ are of small height relative to $(u,v)$.

Now we have
\[
| \gamma |_\nu = | \alpha u + \beta v |_\nu \ge |\beta v|_\nu \left| \frac{-\alpha u}{\beta v} - 1  \right|_\nu
\]
If $|\alpha u/\beta v |_\nu \le 0.9$ say, then
\[
|\gamma |_\nu \ge 0.1 | \beta v |_\nu \ge \frac{1}{9} | \alpha u |_\nu,
\]
whence $ c' |u|_v \log^{-d'}h(\overline u) \le 1$ for some suitable positive constants $c', d'$ depending upon $c,d$. This implies that $h(\overline u)$ is effectively bounded, since we had chosen $\nu$ such that $|u|_\nu \ge h(\overline u)^{1/s}$. If $|\alpha u/\beta v |_\nu > 0.9$ then
\[
\left| \frac{-\alpha u}{\beta v} - 1  \right|_\nu \ge \tfrac12 \log \left( \frac{-\alpha u}{\beta v} - 1  \right)
\]
where we choose the principal branch of the complex log taken relative to the embedding $K\hookrightarrow \CC$ corresponding to $\nu$. Let $\varepsilon_1,\ldots,\varepsilon_r$ be a system of fundamental units for $K$, $m$ the number of roots of unity in $K$, and $\zeta \in K$ a primitive $m$th root of unity. Writing $u/v = \zeta^k\varepsilon_1^{a_1}\cdots\varepsilon_r^{a_r}$, we see that we need to consider the linear form
\[
\Lambda =  a_1\log\varepsilon_1 + \cdots + a_r\log\varepsilon_r + a_{r+1} (2\pi i /w) + \log(-\alpha/\beta)
\]
where $a_{r+1}$ can be adjusted to account for the differences between $\log(\varepsilon_i^{a_i})$ and ${a_i}\log \varepsilon_i$. Applying Theorem \ref{matveev} and again assuming $A$ large enough gives 
\[
|\Lambda| \ge e^{-C\log(eB)A^\prime}
\]
where $B = \max\{a_1,\ldots,a_{r+1}\}$, $A^\prime = \max\{[K : \mathbb{Q}] h(-\alpha/\beta),|\log(-\alpha/\beta)|,0.16 \}$ and $C$ depends on $[K : \mathbb{Q}]$  and the heights of the generators of the group of $S$-units (see \cite{hajdu} for some examples of how these heights can be bounded). We have that $B\ll h(u_1/u_2)\ll h(\overline{u})$, and by our assumption on the heights of the $a_i$ we have $A_{n+2}\ll \log h(\overline{u})$. Putting this all together, it follows that
\[
c' |u|_\nu e^{-d'\log h(\overline u) - \log^2 h(\overline u)} \le 1
\]
for suitable constants $c',d'$. Thus again $h(\overline u)$ can be bounded by an effective constant.

\end{proof}

Now we return to our discussion of matching. Consider a solution $\overline u = (u_1,\ldots, u_n)$ to a given unit equation
\begin{equation}\label{matchdef}
a_1u_1 + a_2u_2 + \cdots + a_n u_n = 0
\end{equation}
with $a_i \in K, (1 \le i \le n)$.  We say two units $u_\ell, u_{k}$ in the above equation can be `matched' if we can write
\[
a_{\ell}u_{\ell} + a_{k}u_{k} = au
\]
with $u$ a unit and with $h(a) \le c + d\log h(\overline u)$ for some effective positive constants $c,d$ that may depend on the number field $K$ as well as the heights of the coefficients $a_i$, but are independent of $\overline{u}$. If we can reduce equation \eqref{matchdef} to a three-term equation by matching sufficiently many units, then by Lemma \ref{threetermlemma} we see that the height of solutions to \eqref{matchdef} can be effectively bounded.

Our main strategy for matching units is to show that they are of comparable size at every place, and deduce that they must then be essentially the \emph{same} unit up to a multiplicative factor of small height. So suppose there exist $1 \le \ell, k, \le n$ and units $u_\ell, u_k$ in the equation \eqref{matchdef} such that for some positive constants $c,d$ and \emph{all} infinite places $\nu$ of $K$ we have
\begin{equation}\label{samesize}
\lVert u_\ell/u_k \rVert_{\nu} \le c \cdot h(\overline{u})^d.
\end{equation}
Then writing
\[
a_\ell u_\ell + a_ku_k = \left(  a_\ell u_\ell/u_k + a_k  \right) u_k
\]
it follows from \eqref{samesize} that
\[
h \left(  a_\ell u_\ell/u_k + a_k  \right) \le  sd \log h(\overline u ) + s\log c +  \max\{h( a_\ell), h( a_k) \}\cdot\log 2
\]
where $s$ is the number of infinite places of $K$. This satisfies our definition of matching.

\section{Norm-Form Equations over Sextic Fields}

Let $K$ and the $\alpha_i  \, (1\le i\le 6)$ be as in the statement of Theorem \ref{sexticthm}. As discussed in Section \ref{prelim}, we may restrict to $m=\pm 1$ in equation \eqref{sexticform}, and assume the $\alpha_i$ lie in $\mathcal{O}_K$. Let $\mathrm{Gal}(K/\QQ) = \{\sigma_1,\ldots,\sigma_6\}$. Since the $\alpha_j$ are $\QQ$-linearly independent and their ratios generate $K$ over $\QQ$, the six vectors
\[
v_j = \big[\, \sigma_j(\alpha_1), \sigma_j(\alpha_2), \sigma_j(\alpha_3), \sigma_j(\alpha_4), \sigma_j(\alpha_5)\, \big] , \quad 1 \le j \le 6
\]
satisfy (up to multiplication by a non-zero constant) exactly one linear relation $a_1v_1 + \cdots + a_6v_6 = 0$. Setting
\[
u_j = \sigma_j(x_1\alpha_1 + x_2\alpha_2 + x_3\alpha_3 + x_4\alpha_4 + x_5\alpha_5)
\]
and noting that equation \eqref{sexticform} with $m=\pm 1$ implies that the $u_j$ are all units in $K$, we obtain a corresponding unit equation
\[
a_1u_1 + a_2u_2 + a_3u_3 + a_4u_4 + a_5u_5 + a_6u_6 = 0.
\]
Since $[K:\QQ]=6$ we have $|S_\infty| = 3$. Fix $\nu\in S_\infty$. After relabelling the $u_j$ if necessary, we may assume
\[
\lVert u_1 \rVert_\nu \ge \lVert u_2 \rVert_\nu \ge \lVert u_3 \rVert_\nu \ge \lVert u_4 \rVert_\nu \ge \lVert u_5 \rVert_\nu \ge \lVert u_6 \rVert_\nu
\]
and moreover that each pair
\[
(u_1,u_2),\; (u_3,u_4), \; (u_5,u_6)
\]
yields a pair of complex conjugates under the embedding corresponding to $\nu$. We will call a unit $u_j$ ``large'' at $\nu$ if
\begin{equation}\label{definelarge}
\lVert u_j \rVert_\nu \ge c \lVert u_1 \rVert_\nu h(u)^{-d}
\end{equation}
for some positive constants $c$ and $d$, independent of $h(u)$. Similarly, $u_j$ would be large at $\nu'$ if it satisfied the inequality  $\lVert u_j \rVert_\nu \ge c' \lVert u_k \rVert_\nu h(u)^{-d'}$ where $u_k$ is the conjugate with largest absolute value at $\nu'$.

Now if $a_1u_1 + a_2u_2 = 0$ then also $a_3u_3 + a_4u_4 + a_5u_5 + a_6u_6 = 0$ and so we have a pair of vanishing subsums. Both these equations can be effectively solved -- for the latter in particular we note it is a 4-term equation and since $|S_\infty|=3$ it is solvable by Vojta's methods \cite{vojta}. Otherwise we have $a_1u_1 + a_2u_2 \neq 0$ and so an application of Theorem \ref{matveev} guarantees that $a_1u_1 + a_2u_2$ is ``large'' at $\nu$. More precisely, it gives
\[
\lVert u_3 \rVert_\nu \ge c \cdot \lVert a_1u_1 + a_2u_2 \rVert_\nu \ge c_1 \lVert u_1 \rVert _\nu h(u)^{-d_1}
\]
where the constants $c_1, d_1$ are effective. Since $u_3$ and $u_4$ are complex conjugates at $\nu$ this immediately gives
\[
\lVert u_4 \rVert_\nu  = \lVert u_3 \rVert_\nu \ge c_1 \lVert u_1 \rVert _\nu h(u)^{-d_1}
\]
and so at most the two terms $u_5$ and $u_6$ can fail to be large at $\nu$ in the sense of \eqref{definelarge}. The action of $\mathrm{Gal}(K/\QQ)$ on the infinite places of $K$ means that sizes at $\nu$ determine the sizes at all remaining places. In particular, for each infinite place of $K$, it follows that at most two terms are potentially ``not-large'' at that place. The action of $\mathrm{Gal}(K/\QQ)$ is also transitive on the $u_j$, and complex conjugation at $\nu$ swaps $u_5$ and $u_6$, so in fact $u_5$ and $u_6$ cannot be among the two smallest units at any place besides $\nu$. This means they are necessarily \emph{large} at the remaining places, i.e. for $\nu' \neq \nu$ we get
\begin{align*}
\lVert u_{5} \rVert_{\nu'} \ge c_1 \cdot \max_{1\le j \le 6} \lVert u_j \rVert_{\nu'}\, h(\overline u)^{d_1} \\
\lVert u_{6} \rVert_{\nu'} \ge c_1 \cdot \max_{1\le j \le 6} \lVert u_j \rVert_{\nu'}\, h(\overline u)^{d_1}.
\end{align*}
In particular this means
\[
c_1\cdot h(\overline{u})^{-d_1} \le\;\, \lVert u_5/u_6 \rVert_{\nu'} \le c_1^{-1} \cdot h(\overline{u})^{d_1}.
\]
Of course at $\nu$ we directly have $\lVert u_5/u_6 \rVert = 1$ since the two units are complex conjugates at $\nu$. So $u_5$ and $u_6$ satisfy \eqref{samesize} and can be matched. By our arguments above, we in fact have something stronger -- for every $1\le j \le 6$ there is a $1\le k \le 6$, $j\neq k$, and a place $\nu_j$ such that $(u_j,u_k)$ are complex conjugates at $\nu_j$ and large at all places other than $\nu_j$. Thus \emph{every} unit can be matched with one other, and so we reduce to a three-term equation, with the cost of a potential logarithmic growth in the size of the coefficients. By Lemma \ref{threetermlemma}, this equation can be solved effectively and so the solutions to our norm-form equation can be effectively determined.

Thus we see that norm-form equations over totally complex sextic fields are always effectively solvable. If the norm-form is in four variables, the solution set is simply the intersection of the solutions to two five-variable norm forms extending the chosen one. If we have a norm-form equation in six variables, then in fact there are always infinitely many solutions, and these can be explicitly described via Dirichlet's Unit Theorem for Orders (see \cite{schmidtbook} for details, and \cite{akhtarivaaler} for some results on heights of solutions in this case). The case of three variables was of course already known due to Vojta \cite{vojta}, and in two variables we have the usual case of Thue equations.

\subsection{Infinite Families and Vanishing Subsums}\label{families}

In \cite{schmidt1971}, Schmidt proved that norm-form equations satisfying a certain non-degeneracy condition have only finitely many solutions. He extended his results in \cite{schmidt1972}, where he proved that the (possibly infinite) solutions in the degenerate case come in finitely many ``maximal families'' and gave an explicit description of how these families arise from subfields of $K$. We recount his classification, following closely the notation and presentation of \cite{schmidt1971} and \cite{schmidt1972}, and show how our results fit into this framework.

As the $x_1,\ldots,x_k$ range over the rational integers, the form $x_1\alpha_1+\cdots+x_k\alpha_k$ spans a $\ZZ$-module in $K$. Call this module $\mathfrak M$. In Theorem \ref{sexticthm}, we have considered the case $k=5$, while $K$ has degree $6$. Thus, in this case in particular, the module is not of \emph{full rank} in $K$, i.e. $\mathrm{rank}\, \mathfrak M < [K:\QQ]$. However, $\mathfrak M$ may contain a submodule $\mathfrak M '$ such that $\mathfrak M '$ is full in some proper subfield of $K$. In this case, the norm form equation $N(\overline x) = m$ may have infinitely many solutions with $\overline x$ in $\mathfrak M$. For example, $\mathfrak M'$ may contain the entire ring of integers of a subfield $L$ of $K$ (where $L$ is not $\QQ$ or imaginary quadratic, so that it has an infinite unit group) giving rise to infinitely many solutions to the equation $N(\overline x) = 1$ with $\overline x$ in $\mathfrak M$.

More generally, there may exist $\mu$ in $K$, a proper subfield $L$ of $K$ and a submodule $\MM '$ of $\MM$ such that $\mu \MM '$ is full-rank in $L$. Again, in this case, our norm-form equation may have infinitely many solutions, for the same reason as above. We say that an $\MM '$ with this property is \emph{proportional} to a full module in $L$. If $\MM$ contains such a submodule $\MM '$ for some subfield $L$ of $K$, and $L$ is not $\QQ$ or imaginary quadratic, then we say that $\MM$ is a degenerate module. A norm-form equation is called degenerate if the norm-form in question spans a degenerate module in $K$. Schmidt's main result in \cite{schmidt1971} implies that non-degenerate norm form equations have only finitely many solutions. In \cite{schmidt1972}, Schmidt essentially shows that once we have accounted for the families discussed above (arising from full modules in subfields), the remaining solutions are finite in number.

In our effective proof for Theorem \ref{sexticthm}, these potential infinite families arise from vanishing subsums in the unit equation. Our first step was to consider $\lVert a_1u_1 + a_2u_2 \rVert_\nu$ and show that this expression is ``large'' if it is non-zero. If it is zero, then letting $\sigma \in \mathrm{Gal}(K/\QQ)$ be such that $\sigma(u_1) = u_2$, we obtain the vanishing subsum $a_1u_1 = b_1\sigma(u_1)$. Solutions to this subsum could give rise to infinitely many solutions to our equation. Alternatively if $a_1u_1 + a_2u_2 \neq 0 $ but two terms can be matched to give zero, then we may again have infinitely many solutions. Since matching is taking place pairwise we again have a subsum involving just two variables, of the form $a_ju_j + a_k\sigma(u_j) = 0$. If $a_1u_1 + a_2u_2 \neq 0$ and after matching we have a three-term equation with non-zero coefficients, then Lemma \ref{threetermlemma} shows we must have finitely many solutions. So if there exist infinitely many solutions to our norm-form equation, then somewhere in our equation we have a subsum of the form $au = b\sigma(u)$ for a unit $u$ and a non-trivial $\sigma \in \mathrm{Gal}(K/\QQ)$.

Suppose there are infinitely many solutions to this subsum, which are also solutions to the norm-form equation. Then fixing an initial solution $u_0$ we see that for any other solution $u$ we must have
\[
u/u_0 = \sigma ( u/u_0)
\]
and so in particular $u/u_0$ always lies in the subfield $K^{\sigma}$ of $K$ fixed by $\sigma$. If there are infinitely many such $u$, then there exists at least one subfield $L$ of $K$ ($L\neq \QQ$ or imaginary quadratic) and a corresponding infinite $L$-family such that the number field $L' = K^\sigma \cap L$ gives rise to infinitely many solutions to our norm-form equation. Now in the case of Theorem \ref{sexticthm} the module $\MM$ has rank at most $5$ and is not full-rank in $K$, so $K^\sigma$ is either quadratic or cubic, and cannot equal $K$. In particular $K^\sigma \cap L = L$, since $K^\sigma \cap L$ gives rise to infinitely many solutions so we cannot have $K^\sigma \cap L = \QQ$. Thus, the entire $L$-family is contained among the solutions to the vanishing subsum, and $(1/u_0)\MM$ contains a submodule that is full-rank in $L$. Thus vanishing subsums in our equation correspond exactly to the families of solutions in the sense of Schmidt \cite{schmidt1972}.

Finally, note that a vanishing subsum of the form 
\[
au = b\sigma(u)
\]
is determined by linear and congruence conditions on exponents of any chosen system of fundamental units. Indeed, let $u_1, u_2$ be a pair of fundamental units, and let $\zeta$ generate the roots of unity in $K$. Then from $au = b\sigma(u)$ we see $b/a$ must be a unit in $K$, say $b/a = \zeta^r u_1^mu_2^n$. Moreover, setting 
\[
u = \zeta^ku_1^{a_1}u_2^{a_2}\,, \quad \sigma(u)  = \zeta^{k'}u_1^{a'_1}u_2^{a'_2}
\]
we can express $k',a'_1,a'_2$ as a linear combination of $k,a_1,a_2$ by checking the action of $\sigma$ on $\zeta,u_1$ and $u_2$. Thus the subsum $u = b\sigma(u)$ is satisfied if and only if 
\begin{align*}
a_1 = a'_1 + m ,\quad a_2 = a'_2+n \quad \text{and} \quad k \equiv k'+r \mod{w}
\end{align*}
where $w$ is the number of roots of unity in $K$. This precisely gives rise to the description for ``effectively determined'' as stated with regard to Theorem \ref{sexticthm}. 

Incidentally, we see that $K$ cannot have a real quadratic subfield -- $K$ would be generated as a cubic extension of this subfield, and every cubic polynomial has a real root, contradicting the assumption that $K$ is totally complex. Thus in fact all infinite families arise from cubic subfields. If $K$ is a CM field, then this cubic subfield is totally real and contains both fundamental units of $K$, so the family of solutions could be quite large-- in particular, one easily finds examples of norm-form equations over CM fields that are satisfied by \emph{every} unit in the field. If $K$ is not CM then any cubic subfield has a unit group of rank one; in this case if we obtain an infinite family of solutions, then this is a one-parameter family (the parameter being the exponent of the fundamental unit of the subfield).

\section{Norm-Form Equations in Higher Degree}

In the case of Theorem \ref{sexticthm}, we saw that matching allowed us to reduce our six-term unit equation to an equation containing only three terms. For Theorem 2, we want to consider how matching can help us solve norm-form equations over number fields of degree $K > 6$. In this case, we have a \emph{system} of unit equations to consider. The matching procedure can still be carried out, and we reduce from $n$ variables to $n/2$, but we need to ensure that $(n/2 - 2)$ equations remain linearly independent after matching. In the special case of equations in a `power basis',  we can guarantee this, and so again we are left with only three-term equations to solve. We now give the proof of this theorem.

\subsection{Generalized Matching}

First, we describe under which situations a condition like \eqref{samesize} holds, so that we can carry out the matching process and reduce the number of terms in our unit equations. Let $\nu$ be any infinite place of $K$ and suppose
\[
\lVert u_1 \rVert_{\nu} \ge \lVert u_2 \rVert_{\nu} \ge \cdots \ge \lVert u_n \rVert_{\nu}
\]
i.e. the $u_j$ are labelled in decreasing order at $\nu$ (if not, just relabel the $u_j$ to ensure this). Additionally, since $K$ is totally complex, we may assume that complex conjugates at $\nu$ are paired together in this ordering, so
\[
\lVert u_{2m-1} \rVert_{\nu} = \lVert u_{2m} \rVert_{\nu} \, , \quad 1 \le m \le n/2.
\]
We claim that if
\begin{equation}\label{m+2}
\lVert u_{n-2} \rVert_{\nu} \ge c_1 \cdot \lVert u_1 \rVert_{\nu}\, h(\overline u)^{d_1},
\end{equation}
then in fact $u_{n-1}$ and $u_n$ can be matched. To see this, we first note that the Galois group of $K$ over $\QQ$ acts transitively on the units $u_j$. The same Galois group acts also on the places of $K$, in a compatible manner -- for example, if $\sigma(u_j) = u_1$ for some $\sigma \in \mathrm{Gal}(K/\QQ)$ and also $\lVert  \cdot \rVert_{\nu'} = \lVert \sigma (\cdot) \rVert_{\nu}$ for some infinite place $\nu'$ of $K$, then $\lVert u_j \rVert_{\nu'} = \lVert u_1 \rVert_{\nu} $ and in particular, $u_j$ is the largest term at $\nu'$. Further, since $n-2$ of the units are ``large'' at $\nu$, we can deduce that $n-2$ of the units are large at any infinite place of $K$ (of course, this need not be the same $n-2$ units for all places). Looking at the Galois action, which is transitive, we see that $u_{n-1}$ and $u_n$ cannot be among the two smallest  terms at any place besides $\nu$, meaning they must be ``large'' (in the sense of \eqref{m+2}) at all places $\nu' \neq \nu$. Immediately we see that $u_{n-1}$ and $u_n$ satisfy \eqref{samesize} for all infinite places of $K$. For $\nu'$ other than $\nu$,  we have
\begin{align*}
 \lVert u_{n-1} \rVert_{\nu'} \ge c \cdot \max_{1\le j\le n} \lVert u_j \rVert_{\nu'} \cdot h(\overline{u})^{-d} \\
 \lVert u_{n} \rVert_{\nu'} \ge c \cdot \max_{1\le j\le n} \lVert u_j \rVert_{\nu'} \cdot h(\overline{u})^{-d}
\end{align*}
for suitable positive constants $c,d$ giving
\[
c\cdot h(\overline{u})^{-d} \le\;\, \lVert u_{n-1}/u_n \rVert_{\nu'} \le c^{-1} \cdot h(\overline{u})^{d}.
\]
At the remaining place $\nu$, the terms $u_n$ and $u_{n-1}$ are complex conjugates so we have a relation like \eqref{samesize} at \emph{all} infinite places of $K$. Thus, by the discussion in Section \ref{matching}, we can match the units $u_{n-1}$ and $u_n$.

As in the sextic case this argument is completely symmetric under the action of $\mathrm{Gal}(K/\QQ)$. In other words, for every infinite place of $K$, we have a distinct pair $u_j$ and $u_k$ that is smallest at the place and large at all the remaining places (assuming of course that $n-2$ terms are large at every place) so we can match $u_j$ with $u_k$. For example, take $\tau \in \mathrm{Gal}(K/\QQ)$, such that $\tau$ is not $1$ and it does not correspond to complex conjugation at any place. Set $u_j = \tau(u_{n-1})$ and $u_k = \tau(u_n)$ and let $\nu'$ be the place of $K$ such that $\lVert \, \cdot \, \rVert_{\nu'} = \lVert \sigma^{-1}(\cdot) \rVert_{\nu}$. Then $u_j$ and $u_k$ are the two smallest terms at $\nu'$, and they must be large at all other places. By exactly the same argument as above, we can match $u_j$ with $u_k$. Thus there is a matched pair for every infinite place of $K$, and every term is part of a matched pair. In effect, we reduce from a system of unit equations in $n$ variables to a system in $n/2$ variables.

\subsection{Proof of Theorem \ref{generalthm}}

Since the case $n=6$ has already been dealt with by Theorem \ref{sexticthm}, we restrict ourselves for the rest of this section to $n \ge 8$. Let us first deal with the case $m=\pm 1$ and $\alpha$ an algebraic integer.  Consider the norm-form equation
\[
N_{K/\QQ}(x + y\alpha + z\alpha^2 + w\alpha^3) = \pm 1,
\]
where $K = \QQ(\alpha)$ is Galois and totally complex, and $\alpha\in \mathcal{O}_K$. Let $\mathrm{Gal}(K/\QQ) = \{\sigma_1,\ldots,\sigma_n\}$ and set $u_j = \sigma_j(x + y\alpha + z\alpha^2 + w\alpha^3)$. The matrix $B$ from Section \ref{prelim} is
\[
B=
\begin{pmatrix}
1& 1 & \cdots & 1 \\
\sigma_1(\alpha) & \sigma_2(\alpha) & \cdots & \sigma_{n}(\alpha) \\ 
\sigma_1(\alpha^2) & \sigma_2(\alpha^2) & \cdots & \sigma_{n}(\alpha^2) \\ 
\sigma_1(\alpha^3) & \sigma_2(\alpha^3) & \cdots & \sigma_{n}(\alpha^3)
\end{pmatrix},
\]
where $\mathrm{Gal}(K/\QQ) = \{\sigma_1,\ldots,\sigma_{n}\}$. Let $1\le i,j,k,\ell\le n$ and define $c_i$ by $\sigma_i(\alpha) = c_i\alpha$ (and similarly for indices $j,k,\ell$). Then the four columns $i,j,k,\ell$ determine the submatrix
\[
\begin{pmatrix}
1& 1 & 1 & 1 \\
c_i\alpha & c_j\alpha & c_k\alpha & c_{\ell}\alpha \\ 
(c_i\alpha)^2 & (c_j\alpha)^2 & (c_k\alpha)^2 & (c_{\ell}\alpha)^2 \\ 
(c_i\alpha)^3 & (c_j\alpha)^3 & (c_k\alpha)^3 & (c_{\ell}\alpha)^3.
\end{pmatrix}
\]
which is a Vandermonde matrix. Since $\alpha$ is primitive, any pair of columns is distinct and thus this submatrix is invertible. We deduce that any four columns of $B$ have rank four. Let $A$ be our matrix of unit equations as in Section \ref{prelim}. Choose any $n-4$ columns of $A$, then permute the columns of $A$ so the chosen $n-4$ columns are the leftmost ones and apply the same permutation to $B$. Now Lemma \ref{lemvoj} implies that our chosen columns of $A$ have rank $n-4$, since any four columns of $B$ have rank $4$. So in fact any $n-4$ columns of $A$ have rank $n-4$. This rank condition will allow us to retain sufficiently many independent equations after matching. We carry out the matching procedure.

Fix an infinite place $\nu$ of $K$. After relabelling $\sigma_1,\ldots,\sigma_n$ we may assume that $\lVert u_1\rVert_\nu \ge \cdots \ge \lVert u_n\rVert_\nu$. Moreover, we may pair the units by complex conjugation at $\nu$, so we have $({u_{2r-1}},{u_{2r}})$ are complex conjugate pairs at $\nu$ for $1\le r\le \tfrac n2$. We can replace $A$ with its reduced row echelon form, i.e.
\[
A = \begin{pmatrix} 
a_1 &  \cdots   & 0 & b_1 & b'_1 &  c_1 & c'_1  \\
\vdots & \ddots    & \vdots & \vdots & \vdots & \vdots & \vdots \\
0 &  \cdots & a_{n-4} & b_{n-4} &  b'_{n-4} &  c_{n-4} &  c'_{n-4}
\end{pmatrix},
\]
where the leftmost $n-4$ columns give a diagonal submatrix since any $n-4$ columns of $A$ were shown to have full rank.  Further, the entries in the rightmost four columns are all non-zero, again since any $n-4$ columns of $A$ have rank $n-4$.

In particular, since $b_1 \neq 0$, the first row tells us
\[
| b_1u_{n-3}|_\nu \gg | a_1u_1|_\nu \quad \text{i.e.} \quad | u_{n-3}|_\nu \gg | u_1 |_\nu
\]
and then from complex conjugation at $\nu$ that $ | u_{n-2}|_\nu \gg | u_1|_\nu$. Thus we see that $n-2$ of the units $u_j$ are ``large'' at $\nu$, i.e. that we have satisfied the condition in \eqref{m+2}. We can therefore match each unit with one other, going from a system of equations in $n$ variables to one in $n/2$ variables. In fact in this case our coefficients only increase by a factor of absolutely bounded height, independent of $h(\overline u)$, unlike the sextic case.

We now permute the columns of $A$ (and relabel the $u_i$ accordingly) to ensure that $({u_{n-3}},u_{n-2})$ and $(u_{n-1}, u_{n})$ are both matched pairs, and row reduce again. Once again, since any $n-4$ columns of $A$ have rank $n-4$, we obtain a diagonal submatrix in the first $n-4$ columns. Thus matching just the rightmost two pairs yields
\[
A = \begin{pmatrix} 
a'_1 &  \cdots   & 0 & * & * &  * & *  \\
\vdots & \ddots    & \vdots & \vdots & \vdots & \vdots & \vdots \\
0 &  \cdots & a'_{n-4} & * & * &  * &  *
\end{pmatrix}  \xrightarrow{\text{match}}
\begin{pmatrix} 
a'_1 &  \cdots   & 0 & * & *  \\
\vdots & \ddots    & \vdots & \vdots & \vdots  \\
0 &  \cdots & a'_{n-4} & * &  * 
\end{pmatrix}
\]
which means we have at most three non-zero terms left in each equation. Three term unit equations can all be solved following Lemma \ref{threetermlemma}, so we deduce an effective bound on the heights of all solutions (although the situation here is again easier than Lemma \ref{threetermlemma} since the heights of the coefficients after matching are absolutely bounded).

To go from the case $m=\pm 1$ to arbitrary $m$, note that the collection of matrices $A_\mu$ as described in Section \ref{prelim} are obtained from $A$ by multiplying each column by a different constant. In particular, these matrices $A_\mu$ still obey the condition that any $n-4$ columns have full rank. Thus the above argument works exactly the same, guaranteeing that we reduce to three-term equations after matching. This completes the proof of Theorem \ref{generalthm}.

\textbf{Remark.} Restricting Theorem \ref{generalthm} to the case of equations in a ``power basis'' allowed us to show that the matrix $B$ satisfied a certain rank condition -- namely that any $4$ columns had full rank. In particular, this means our matching procedure allows us to solve \emph{any} norm-form equation where $K$ satisfies the conditions of Theorem \ref{generalthm} and the associated matrix $B$ satisfies this rank condition.

We could also try to solve equations in five variables over $K$ of higher degree. So long as the equations do not collapse too much after matching, i.e. as long as at least $(n/2-2)$ independent equations are maintained, the proof of Theorem \ref{generalthm} can be carried out to determine the full set of solutions. Again, one may look to the LMFDB for test cases-- the simplest example being for CM fields of degree $8$.  Fields in the database are presented along with a primitive element $\alpha$, and we may consider the equation
\[
N_{K/\QQ}(x+y\alpha+z\alpha^2+w\alpha^3+v\alpha^4) = \pm 1
\]
with this choice of $\alpha$ in each case. For 23229 of the total 29951 octic CM fields available, this equation is solvable by our methods. These examples, among others, will be discussed more thoroughly in \cite{comps}.

\section{Acknowledgements} The author would like to thank Mike Bennett for several discussions,  K{\'a}lm{\'a}n Gy{\H{o}}ry for helpful comments on a preprint version of this article, and the anonymous referees for their suggestions and careful reading.

\bibliographystyle{acm}
\bibliography{Bajpai-NormForm-short}

\end{document}